\newtheorem{theorem}{Theorem}[section]
\newtheorem{lemma}[theorem]{Lemma}
\newtheorem{proposition}[theorem]{Proposition}
\newtheorem{corollary}[theorem]{Corollary}
\newtheorem{definition}[theorem]{Definition}
\newtheorem{remark}[theorem]{Remark}
\newcommand{\N}{\mathbb N}
\newcommand{\R}{\mathbb R}
\newcommand{\F}{\mathcal{F}}
\newcommand{\mS}{\mathcal{S}}
\newcommand{\bK}{\mathbb K}
\author{Filip Strobin}
\address{Institute of Mathematics, Jan Kochanowski University in Kielce, ul. \'Swietokrzyska 15, 25-406 Kielce, Poland
}
\address{Institute of Mathematics, \L\'od\'z University of Technology, W\'olcza\'nska 215, 93-005
\L\'od\'z, Poland}
\email {filip.strobin@p.lodz.pl}
\author{Jaros\l aw Swaczyna}
\address{Institute of Mathematics, \L\'od\'z University of Technology, W\'olcza\'nska 215, 93-005
\L\'od\'z, Poland}
\email {jswaczyna@wp.pl}
\title[]{A code space for a generalized IFS}
\subjclass[2010]{Primary: 28A80 ; Secondary:37C25, 37C70 } 
\keywords{fractal, iterated function system, generalized iterated function system, code space, shift space, connected set}
\date{}
\begin{document}
\maketitle
\begin{abstract}
We study the concept of a code (or shift) space for a generalized iterated function system (GIFS in short). We prove that relations between GIFSs and their code spaces are analogous to the case of classical IFSs.  As an application, we consider the problem of connectedness of attractors of GIFSs. Many of our results are strengthenings of the ones proved recently by Mihail, Miculescu and Secelean, but some are completely new. 
\end{abstract}
\section{Introduction}
Let $(X,d)$ be a metric space and $m\in\N$.\\
By $X^m$ we denote the Cartesian product of $m$ copies of $X$. We consider it as a metric space with the maximum metric $d_m$:
$$
d_m((x_1,...,x_m),(y_1,...,y_m)):=\max\{d(x_1,y_1),...,d(x_m,y_m)\}.
$$
A function $f:X^m\rightarrow X$ is called a \emph{generalized $\varphi$-contraction} if for some nondecreasing, upper semicontinuous function $\varphi:[0,\infty)\rightarrow [0,\infty)$ with $\varphi(t)<t$ for $t>0$, the following condition holds   
$$
\forall_{x,y\in X^m}\;d(f(x),f(y))\leq \varphi(d_m(x,y)).
$$
Note that if the Lipschitz constant $Lip(f)<1$, then $f$ is a generalized $\varphi$-contraction. Also if $m=1$, then a generalized $\varphi$-contraction is called a $\varphi$-contraction (cf. \cite{Br}, \cite{JJ}).\\
By $\bK(X)$ we denote the metric space of all nonempty and compact subsets of $X$, endowed with the Hausdorff metric $H$:
$$
H(A,B):=\max\left\{\sup\left\{\inf\{d(x,y):y\in A\}:x\in B\right\},\sup\left\{\inf\{d(x,y):y\in B\}:x\in A\right\}\right\}
$$
\begin{definition}
\emph{If $f_1,...,f_n:X^m\rightarrow X$ are generalized $\varphi$-contractions, then the finite sequence $\mS=(f_1,...,f_n)$ is called }a generalized iterated function system of order $m$\emph{ (GIFS in short).}\\
\emph{If $\mS=(f_1,...,f_n)$ is a GIFS of order $m$, then by $F_\mS$ we denote the mapping $F_\mS:\bK(X)^m\rightarrow \bK(X)$ defined by:
$$
F_\mS(D_1,...,D_m):=f_1(D_1\times...\times D_m)\cup...\cup f_n(D_1\times...\times D_m)
$$}
\end{definition}
Clearly, GIFSs of order $1$ are classical iterated function systems (IFSs in short), which have been deeply considered in the last 30 years (see, a.e., \cite{H}, \cite{Ha}, \cite{B}).\\

In the recent papers \cite{Mi}, \cite{M}, \cite{M1}, \cite{MM1},\cite{MM2}, \cite{MS}, \cite{Se}, \cite{SS} and \cite{S}, the theory of GIFSs was developed. In particular, the following generalization of a classical result holds (\cite{M}, \cite{MM1}, \cite{MM2}, \cite{SS}):
\begin{theorem}\label{it2}
Assume that $(X,d)$ is a complete metric space and $\mS$ is a GIFS of order $m$. Then there exists a unique set $A_\mS\in\bK(X)$ such that
$$
F_\mS(A_\mS,...,A_\mS)=A_\mS.\label{111}
$$
Moreover, for every $D_1,...,D_m\in\bK(X)$, the sequence of iterates $(D_k)$ defined by
$$
D_{m+k}:=F_\mS(D_k,...,D_{k+m-1}),
$$
converges (with respect to the Hausdorff metric) to $A_\mS$.
\end{theorem}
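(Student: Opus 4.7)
The plan is to generalize Hutchinson's classical fixed-point construction by treating $F_\mS:\bK(X)^m\to\bK(X)$ as a higher-order generalized contraction, proving the prescribed iteration is Cauchy in the complete metric space $\bK(X)$, and identifying its limit as the unique fixed point. The first step is a preparatory reduction to a single controlling gauge: replace the $\varphi_i$'s attached to $f_1,\ldots,f_n$ by $\varphi:=\max_i\varphi_i$, which remains nondecreasing, upper semicontinuous, and strictly below the identity. Standard Hausdorff-metric estimates on the compact products $D_1\times\cdots\times D_m$, combined with the union bound for the Hausdorff distance, then yield
$$
H\bigl(F_\mS(D_1,\ldots,D_m),F_\mS(E_1,\ldots,E_m)\bigr)\le \varphi\bigl(\max_{1\le i\le m}H(D_i,E_i)\bigr),
$$
together with the fact that $F_\mS$ takes values in $\bK(X)$ (continuous images of compacts, then finite unions).

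Next, for any iteration $(D_k)$ I would track the consecutive distances $\beta_k:=H(D_k,D_{k+1})$ and their window maxima $M_k:=\max_{0\le i<m}\beta_{k+i}$. The contraction estimate immediately gives $\beta_{m+k}\le\varphi(M_k)$, from which $(M_k)$ is nonincreasing and $M_{k+m}\le\varphi(M_k)$; together with $\varphi(t)<t$ and upper semicontinuity this forces $M_k\to 0$, in particular $\beta_k\to 0$. The main obstacle---and the heart of the argument---is promoting this to the full Cauchy property for $(D_k)$, since for general $\varphi$-contractions the distances $\beta_k$ need not be summable. My plan is a Matkowski-type argument adapted to the order-$m$ recurrence: assuming the sequence fails to be Cauchy, extract $\varepsilon>0$ and indices $q_l<p_l\to\infty$ with $p_l$ chosen minimal so that $H(D_{p_l},D_{q_l})>\varepsilon$. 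Triangle inequalities combined with $\beta_k\to 0$ then force $H(D_{p_l},D_{q_l})\to\varepsilon$, and applying the contraction to $D_{p_l}=F_\mS(D_{p_l-m},\ldots,D_{p_l-1})$ and $D_{q_l}=F_\mS(D_{q_l-m},\ldots,D_{q_l-1})$---while bounding each inner Hausdorff distance $H(D_{p_l-m+i},D_{q_l-m+i})$ above by $H(D_{p_l},D_{q_l})+o(1)$ (shifts of length at most $m$ contribute $o(1)$ since $\beta_k\to 0$)---the right-continuity of the nondecreasing upper semicontinuous $\varphi$ delivers $\varepsilon\le\varphi(\varepsilon)<\varepsilon$, a contradiction.

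Completeness of $\bK(X)$, inherited from $X$, then supplies a limit $A_\mS\in\bK(X)$, and continuity of $F_\mS$ (immediate from the contraction estimate) allows passing to the limit in the recursion $D_{m+k}=F_\mS(D_k,\ldots,D_{k+m-1})$ to conclude $F_\mS(A_\mS,\ldots,A_\mS)=A_\mS$. Uniqueness is a one-line application of the contraction: any competing fixed point $B$ would satisfy $H(A_\mS,B)\le\varphi(H(A_\mS,B))$, forcing $H(A_\mS,B)=0$. Since the Cauchy argument applies verbatim to arbitrary initial $m$-tuples $D_1,\ldots,D_m\in\bK(X)$, this also establishes the convergence claim.
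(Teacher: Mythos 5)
This theorem is quoted in the paper from \cite{[SS]} without an accompanying proof, so there is no in-paper argument to compare against; judged on its own, your plan is correct and would compile into a complete proof. Every step is sound: the reduction to a single gauge $\varphi=\max_i\varphi_i$, the lifted estimate $H(F_\mS(D_1,\ldots,D_m),F_\mS(E_1,\ldots,E_m))\le\varphi(\max_iH(D_i,E_i))$ (using compactness to attain the infima and the union bound for $H$), the window maxima $M_k$ satisfying $M_{k+m}\le\varphi(M_k)$ and hence $\beta_k\to 0$, and the Matkowski-style contradiction for the Cauchy property, where the only delicate point --- that a nondecreasing upper semicontinuous $\varphi$ is right-continuous, so $\varepsilon<\varphi(c_l)$ with $c_l\le\varepsilon+o(1)$ forces $\varepsilon\le\varphi(\varepsilon)$ --- you identify explicitly and handle correctly. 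Two small remarks. First, make sure the minimality argument discards the degenerate case $p_l-1=q_l$ (there $H(D_{p_l},D_{q_l})=\beta_{q_l}\to 0$, contradicting $>\varepsilon$), so that $H(D_{p_l-1},D_{q_l})\le\varepsilon$ is actually available. Second, the route taken in \cite{[SS]} (and the one most natural for a \emph{nondecreasing} $\varphi$) avoids the contradiction argument altogether: one first checks the orbit is bounded and then observes that the tail blocks $B_k=\{D_j:j\ge km+1\}$ satisfy $\operatorname{diam}(B_{k+1})\le\varphi(\operatorname{diam}(B_k))$, since every element of $B_{k+1}$ is $F_\mS$ applied to an $m$-tuple from $B_k$; the standard lemma $\varphi^k(t)\to 0$ for such comparison functions then gives the Cauchy property directly and quantitatively. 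Your argument is no less valid, merely a different (and slightly heavier) engine for the same step; everything downstream --- completeness of $\bK(X)$, passage to the limit in the recursion, and the one-line uniqueness --- is exactly as it should be.
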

Sets $A_\mS$ which are generated by GIFSs in the above sense will be called \emph{attractors} or \emph{fractals} (generated by GIFS $\mS$). We will always denote by $A_\mS$ the attractor of a GIFS $\mS$.\\

Also, in \cite{S} it is shown that GIFSs give us a new class of fractals, but does not give us everything:
\begin{theorem}
For any $m>1$ and $\alpha<1$, there exists a Cantor set $C(m,\alpha)\subset \R^2$ such that
\begin{itemize}
\item[(i)] there exists a GIFS $\mS=(f_1,...,f_4)$ on $\R^2$ of order $m$ such that $Lip(f_i)\leq \alpha$ and $C(m,\alpha)$ is the attractor of $\mS$;
\item[(ii)] $C(m,\alpha)$ is not an attractor of any GIFS $\mS$ on $\R^2$ of order $m-1$.
\end{itemize}
\end{theorem}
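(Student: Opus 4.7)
The plan is to split the claim into an explicit construction giving (i) and a rigidity argument ruling out (ii).

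For (i), I would look for affine maps $f_i : (\R^2)^m \to \R^2$ of the form
\[
f_i(x_1, \ldots, x_m) = \sum_{j=1}^m L_{i,j} x_j + b_i, \quad i=1,\ldots,4,
\]
with linear maps $L_{i,j} : \R^2 \to \R^2$ satisfying $\sum_j \|L_{i,j}\| \leq \alpha$, so that each $f_i$ is $\alpha$-Lipschitz on $((\R^2)^m, d_m)$. Theorem 1.1 then produces a unique attractor $A_\mS \in \bK(\R^2)$, and the matrices $L_{i,j}$ and translations $b_i$ should be tuned so that $A_\mS$ is totally disconnected and perfect (hence a Cantor set) and so that each of the $m$ arguments contributes essentially to the decomposition $A_\mS = \bigcup_i f_i(A_\mS^m)$. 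Setting $C(m,\alpha) := A_\mS$, (i) would hold by construction.

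For (ii), suppose toward contradiction that $C(m,\alpha) = A_{\mS'}$ for some GIFS $\mS' = (g_1, \ldots, g_k)$ of order $m-1$, i.e.
\[
C(m,\alpha) = \bigcup_{i=1}^k g_i(C(m,\alpha)^{m-1}).
\]
The construction in (i) should be rigged so that the family of small copies of $C(m,\alpha)$ at sufficiently fine scales, together with their relative positions, records $m$-fold combinatorics that no union of $(m-1)$-ary images can replicate, independently of the number $k$ of maps. Concretely, I would try to isolate a geometric/combinatorial invariant of $C(m,\alpha)$ — for example, counts of small copies localized in small windows, or a growth rate of cylinder sets in the natural coding — and then obtain a contradiction via a pigeonhole argument on the pieces produced by iterating $F_{\mS'}$ versus iterating $F_\mS$.

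The main obstacle is clearly (ii): tailoring $C(m,\alpha)$ to be simultaneously realizable as the attractor of an order $m$ GIFS with small Lipschitz constants (the easy direction, since any attractor of an order $m-1$ GIFS is trivially an attractor of an order $m$ GIFS by letting the extra argument be a dummy) and rigid enough to forbid every order $(m-1)$ representation. The novelty lies entirely in this non-existence statement. Verifying the obstructing invariant rigorously — most plausibly by induction on the iteration level of $F_{\mS'}$ and a careful comparison with the corresponding iterates of $F_\mS$, controlling both the geometry of the small copies and the Lipschitz budget — is where I expect the bulk of the technical work to lie.
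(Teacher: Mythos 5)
This theorem is not proved in the present paper at all: it is quoted from the reference [S] as motivating background, so there is no in-paper argument to measure your attempt against. Judged on its own terms, your proposal is a statement of intent rather than a proof. For part (i) you never specify the linear parts $L_{i,j}$ or the translations $b_i$, nor verify that the resulting attractor is perfect and totally disconnected; that part is presumably routine but it is simply absent. More importantly, you correctly identify that all the content lies in (ii), and then (ii) is deferred entirely to an unspecified ``geometric/combinatorial invariant'' that is never defined and never shown to obstruct anything.

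The concrete problem is that the candidate invariants you name would fail as stated. A GIFS of order $m-1$ is allowed arbitrarily many maps $k$ and Lipschitz constants $\lambda$ arbitrarily close to $1$; at generation $j$ it produces on the order of $k^{1+(m-1)+\cdots+(m-1)^{j-1}}$ cylinder pieces of diameter at most $\lambda^{j}\cdot\mathrm{diam}$, so the count-versus-scale profile satisfies roughly $\log\log N(\delta)\approx \log(1/\delta)\,\log(m-1)/\log(1/\lambda)$, and the coefficient can be made as large as you like by pushing $\lambda$ toward $1$. Hence neither the number of small copies in small windows nor the growth rate of cylinder sets can, by itself, separate order $m$ from order $m-1$: any finite-scale count achievable by an order $m$ system is also achievable by an order $m-1$ system with more maps and weaker contraction. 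A viable obstruction must therefore be wired into the geometry of $C(m,\alpha)$ itself (for instance through a carefully chosen hierarchy of gap sizes forcing separation properties that no $(m-1)$-ary composition tree can reproduce uniformly over all $k$ and all $\lambda<1$), and your proposal contains neither that construction nor the accompanying argument. As written, part (ii) restates the goal; it does not prove it.
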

In particular, taking $m=2$, we get a Cantor subset of a plane which is an attractor of some GIFS and is not an attractor of any IFS.
\begin{theorem}
There is a Cantor set $C\subset \R^2$ which is not an attractor of any GIFS on $\R^2$.
\end{theorem}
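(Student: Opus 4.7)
The strategy is to combine a structural constraint on GIFS attractors with a diagonal construction over the countable parameter set $(n,m)$.

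First, I would derive the key structural consequence of being a GIFS attractor. Suppose $C=A_\mS$ for some GIFS $\mS=(f_1,\dots,f_n)$ of order $m$ with common modulus $\varphi$. Iterating the fixed-point equation
$$
C=\bigcup_{i=1}^n f_i(C^m)=\bigcup_i f_i\Big(\big(\bigcup_j f_j(C^m)\big)^m\Big)=\dotsb ,
$$
one sees that at the $k$-th iterate $C$ is covered by $N_{n,m}(k):=n^{1+m+m^{2}+\dots+m^{k-1}}$ sets each of diameter at most $\varphi^{(k)}(\mathrm{diam}(C))$. Thus, writing $\mathrm{cov}(C,\varepsilon)$ for the minimal number of sets of diameter $\le\varepsilon$ needed to cover $C$, any GIFS attractor $C$ must satisfy
$$
\mathrm{cov}\bigl(C,\varphi^{(k)}(\mathrm{diam}(C))\bigr)\le N_{n,m}(k)\quad\text{for all }k\ge 1.
$$

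Next, I would build $C\subset\R^2$ by an explicit Cantor scheme, constructed level-by-level, to violate the above inequality for every triple $(n,m,\varphi)$. Since pairs $(n,m)\in\N^2$ are countable, enumerate them as $(n_l,m_l)_{l\in\N}$; at stage $l$ of the construction I would choose a scale $\varepsilon_l$ (much smaller than all previous diameters) and place enough pieces of diameter $\le\varepsilon_l$ into the Cantor scheme so that $\mathrm{cov}(C,\varepsilon_l)$ exceeds $N_{n_l,m_l}(k)$ for every $k$ with $\varphi^{(k)}(\mathrm{diam}(C))\ge\varepsilon_l$. The planar embedding and the control of diameters ensure $C$ is compact, perfect, and totally disconnected, hence a Cantor set.

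The main obstacle is that the modulus $\varphi$ is not part of the enumeration: $\varphi$ is an arbitrary nondecreasing u.s.c.\ function with $\varphi(t)<t$, so $\varphi^{(k)}(\mathrm{diam}(C))$ may decay arbitrarily slowly, and we cannot a priori match a scale $\varepsilon_l$ to a level $k$. To overcome this I would phrase the constraint uniformly in $\varphi$: for fixed $(n,m)$ and any admissible $\varphi$, the function $k\mapsto(\varphi^{(k)}(D),N_{n,m}(k))$ traces a \emph{monotone} curve in the $(\varepsilon,\#)$-plane, and this curve lies in a region determined only by $(n,m)$ (together with monotonicity of $\varphi^{(k)}$). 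Designing $C$ so that the graph of $\varepsilon\mapsto\mathrm{cov}(C,\varepsilon)$ eventually exits \emph{every} such admissible region then suffices; the countability of $(n,m)$ permits a straightforward diagonalization.

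If this direct covering-complexity approach runs into difficulties with the arbitrariness of $\varphi$, a fallback is a Baire-category argument: show that for each fixed $(n,m)$, the set of attractors of GIFSs of order $m$ with $n$ maps is a nowhere dense (analytic) subset of $\bK(\R^2)$, so that by Baire's theorem the countable union over $(n,m)$ is meager, and a generic Cantor set — in any comeager family — fails to be a GIFS attractor.
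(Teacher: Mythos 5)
First, a remark on scope: the paper does not prove this theorem at all --- it is quoted in the introduction as a result from the reference \cite{S} (in the spirit of Crovisier--Rams \cite{CR}), so there is no in-paper proof to compare against. Judged on its own, your proposal has a fatal gap. The covering-number constraint you derive is correct (indeed $A_\mS=\bigcup_{\alpha\in\Omega^k}A_\alpha$ with $|\Omega^k|=n^{1+m+\dots+m^{k-1}}$ and $\mathrm{diam}(A_\alpha)\le\varphi^{(k)}(\mathrm{diam}\,A_\mS)$, exactly as in Proposition \ref{p11} and Lemma \ref{zwart}), but it is \emph{vacuous} for compact subsets of $\R^2$: any compact $C\subset\R^2$ satisfies $\mathrm{cov}(C,\varepsilon)=O(\varepsilon^{-2})$, whereas the bound $N_{n,m}(k)=n^{(m^k-1)/(m-1)}$ grows doubly exponentially in $k$ and the scales $\varphi^{(k)}(D)$ may decay arbitrarily slowly. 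Concretely, for the admissible modulus $\varphi(t)=t/(1+t)$ one has $\varphi^{(k)}(D)=D/(1+kD)\sim 1/k$, so $\mathrm{cov}\bigl(C,\varphi^{(k)}(D)\bigr)=O(k^2)\ll N_{n,m}(k)$ for \emph{every} compact planar $C$; even in the Lipschitz case with ratio $\alpha$, the requirement reduces to $\dim_B C\lesssim \log n/\log(1/\alpha)$, which is unbounded as $\alpha\to 1$. Hence no choice of $C$ can ``exit every admissible region,'' and the diagonalization over $(n,m)$ has nothing to diagonalize against. This is precisely why the known constructions do not use covering numbers but finer local-geometric obstructions (e.g.\ comparing the relative sizes of construction pieces and of the gaps separating them at nearby locations and scales, which for an attractor are forced to be coherent by the maps $f_\alpha$, and then building a Cantor set whose local geometry varies too wildly from point to point).

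Your Baire-category fallback is a legitimate strategy in outline (the Cantor sets do form a dense $G_\delta$ in $\bK(\R^2)$, and countably many meager classes can be avoided), but it simply relocates the difficulty: proving that the attractors of order-$m$ GIFSs with $n$ maps form a meager subset of $\bK(\R^2)$ requires exhibiting, inside every basic open set of $\bK(\R^2)$, compacta that are provably not such attractors --- i.e.\ exactly the quantitative local obstruction that the covering argument fails to supply. As written, neither route closes the proof.
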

In this paper we make a further study on GIFSs. We focus our attention on defining a code space for GIFSs 
 and on the problem of connectedness of attractors of GIFSs (these topics were partially considered in \cite{M1} and \cite{MS}). More precisely:\\
In Section $2$ we define and investigate a code (or shift) space for a GIFS. Note that in \cite{M1} (and \cite{MS}) there is another definition of a code space for GIFSs given. Our construction is a bit different (but equivalent) to that one but has the advantage that some further considerations can be handled in a bit technically simpler ways than in cited papers  (at least, we believe that it is so). Also, in \cite{M1} there is only the case $m=2$ considered.\\
Section $3$ is devoted to connections between GIFSs and their code spaces. 
Note that many results can be considered as extended counterparts of that given in \cite{M1} (we do not restrict to the case $m=2$ and to GIFSs consisting of Lipschitzian mappings), but some of them are completely new. In particular, we will prove a counterpart of the following one ($\{1,...,n\}^\N$ is the space of all sequences of elements from $\{1,...,n\}$):
\begin{theorem}\label{it1}
Assume that $\mS=(f_1,...,f_n)$ is an IFS on a complete metric space $X$. Then for every sequence $\alpha=(\alpha^1,\alpha^2,...)\in\{1,...,n\}^\N$, there is $x_\alpha\in X$ such that 
$$
\bigcap_{k\in\N}f_{\alpha^1}\circ...\circ f_{\alpha^k}(A_\mS)=\{x_\alpha\}.
$$
Moreover,
\begin{itemize}
\item[(i)] the mapping $\{1,...,n\}^\N\ni\alpha\rightarrow x_\alpha\in X$ is continuous;
\item[(ii)] $A_\mS=\{x_\alpha:\alpha\in\{1,....,n\}^\N\}$;
\item[(iii)] for every $\alpha=(\alpha^1,\alpha^2,...)\in\{1,...,n\}^\N$ and $D\in\bK(X)$, the sequence of compact sets $(f_{\alpha^1}\circ...\circ f_{\alpha^k}(D))_{k\in\N}$ converges to $\{x_\alpha\}$ (with respect to the Hausdorff metric). In particular,
\item[(iv)] for every $\alpha=(\alpha^1,\alpha^2,...)\in\{1,...,n\}^\N$ and every $x\in X$, $\underset{k\rightarrow \infty}{\lim}f_{\alpha^1}\circ...\circ f_{\alpha^k}(x)=x_\alpha$.
\end{itemize}
\end{theorem}
   
In Section $5$ we use the concepts developed in previous sections to study the problem of connectedness of attractors. Again, some of our results are extensions of that given in \cite{MS}.\\

At the end of this section let us state some denotations concerned with sequences.\\
If $\alpha$ is a finite sequence, then by $|\alpha|$ we denote the length of $\alpha$ (i.e., if $\alpha=(\alpha^1,...,\alpha^k)$, then $|\alpha|=k$; also, we set $|\emptyset|=0$). If $\alpha=(\alpha^1,...,\alpha^k)$ and $m\leq k$, then we denote $\alpha\vert_m=(\alpha^1,...,\alpha^m)$; also we set $\alpha\vert_0=\emptyset$.
If $\alpha$ is an infinite sequence, then $\alpha\vert_m$ has an analogous meaning. If $\alpha=(\alpha^1,...,\alpha^k)$, then we set $\alpha\hat\;\beta=(\alpha^1,...,\alpha^k,\beta)$, the concatenation of $\alpha$ and $\beta$. \\

\section{A generalized code space}
In this section we assume that $n,m\in\N$ are fixed.\\
At first, let us define $\Omega_1,\Omega_2,\ldots $ by the following inductive formula:\\
$$\Omega_1:=\{1,\ldots ,n\}$$
$$\Omega_{k+1}:=\underbrace{\Omega_k\times\ldots \times\Omega_k}_{m\mbox{ times}}\;\;\;\mbox{for }k\geq 1$$
Then for every $k\in\N$, let $${}_{k}\Omega:=\Omega_1\times \ldots \times\Omega_k$$ and define
$$\Omega_<:=\bigcup_{k\in\N}{}_{k}\Omega$$
and
$$\Omega:=\Omega_1\times \Omega_2\times\Omega_3\times\ldots$$
Note that the above definitions depend on $n$ and $m$, but we decided not to write it so the notation remains more clear.\\
The space $\Omega$ is called a \emph{code space}, and we consider it as a metric space with the natural metric (here $\alpha=(\alpha^1,\alpha^2,\ldots )$ and $\beta=(\beta^1,\beta^2,\ldots )$):
$$
d(\alpha,\beta):=\underset{i\in\N}{\sum}\frac{d_i(\alpha^i,\beta^i)}{(m+1)^i},
$$
where each $d_i$ is a discrete metric on $\Omega_i$. Clearly, $\Omega$ is a compact space (in fact, it is a Cantor space).
\begin{remark}
\emph{Let us note that while defining metric on $\Omega$, we could give it any of the form $d(\alpha,\beta)=\underset{i\in\N}{\sum}{q^{i}}{d_i(\alpha^i,\beta^i)}$, where $q\in(0,1)$. However, in view of Proposition $\ref{kontrtau}$, we need $q$ such that $q<\frac{1}{m}$. That is why we decided to choose $q=\frac{1}{m+1}$.}
\end{remark}
\begin{remark}\emph{
Note that in the case $m=1$, $$\Omega=\{1,...,n\}\times\{1,...,n\}\times\{1,...,n\}\times...$$ and it is the standard code space for IFS consisting of $n$ mappings.}
\end{remark}
\begin{remark}\emph{
In \cite{M1}, the code space is defined by:
$$
\Omega'=\{1,...,n_1\}\times\{1,...,n_2\}\times\{1,...,n_3\}\times...,
$$
where $n_k=n^{2^{k-1}}$, $k\geq 1$. Clearly, $n_k$ is the number of elements of $\Omega_k$ in the case when $m=2$ (this is the only case considered in \cite{M1}). On one hand our notation is more technically complicated, but on the other (we believe that) it is more natural in dealing with some further problems.
}\end{remark}
For $\alpha \in \Omega_<$, let us define $V(\alpha):=\{\beta \in \Omega:\alpha\prec\beta\}$ (the notation $\alpha\prec\beta$ means that the sequence $\beta$ is an extension of $\alpha$). It is an easy and well-known fact that the family $\{V(\alpha):\alpha \in\Omega_<\}$ is a base of the considered topology on $\Omega$.\\

If $k>1$ and $$\alpha=(\alpha^1,\ldots ,\alpha^k)=(\alpha^1,(\alpha^2_1,\ldots ,\alpha^2_m),\ldots ,(\alpha^k_1,\ldots ,\alpha^k_m))\in{}_{k}\Omega$$ then for any $i\in \{1,\ldots ,m\}$, we set
\begin{equation}
\alpha(i):=(\alpha^2_i,\alpha^3_i,\ldots ,\alpha^{k-1}_i).\label{n1}
\end{equation}
Clearly, $\alpha(i)\in {}_{k-1}\Omega$.\\
If $\alpha \in \Omega$, we define $\alpha(i) \in \Omega$ in an analogous way.\\
Now we will define a particular family of mappings. At first, define $\tau_1,\ldots ,\tau_n:\Omega^m\rightarrow\Omega$ in the following way (note that $\Omega^m$ is the Cartesian product of $m$ copies of $\Omega$): if $\alpha_1=(\alpha_1^1,\alpha_1^2,\alpha_1^3,\ldots),\ldots ,\alpha_m=(\alpha_m^1,\alpha_m^2,\alpha_m^3,\ldots)$, then set
$$
\tau_i(\alpha_1,\ldots ,\alpha_m):=(i,(\alpha_1^1,\alpha_2^1,\ldots ,\alpha_m^1),(\alpha_1^2,\alpha_2^2,\ldots ,\alpha_m^2),\ldots ).
$$
The following result (which is an extended counterpart of \cite[Lemma 3.2]{M1}) shows that $(\tau_1,...,\tau_n)$ can be considered as a canonical GIFS on $\Omega$.
\begin{proposition}\label{kontrtau}
$\mS_\Omega:=(\tau_1,\ldots ,\tau_n)$ is a GIFS on $\Omega$ such that $Lip(\tau_i) \leq \frac{m}{m+1}$ and $\Omega$ is its attractor.
\end{proposition}
\begin{proof}
Take any $i \in \{1,\ldots ,n\}$, and, for simplicity, set $\tau=\tau_i$. Let $\alpha=(\alpha_{1},\ldots ,\alpha_{m}), \beta=(\beta_{1},\ldots ,\beta_{m}) \in \Omega^{m}$. Then (we denote $\tau(\alpha)=((\tau(\alpha))^{1},(\tau(\alpha))^{2},\dots)$)

$$d(\tau(\alpha),\tau(\beta))=
\underset{j\in\N}{\sum}\frac{d_j((\tau(\alpha))^{j},(\tau(\beta))^{j})}{(m+1)^{j}}=
\frac{d_1((\tau(\alpha))^{1},(\tau(\beta))^{1})}{m+1} + \underset{j \geq 2}{\sum}\frac{d_j((\tau(\alpha))^{j},(\tau(\beta))^j)}{(m+1)^j}=$$

$$=\frac{d_{1}(i,i)}{m+1}+ \underset{j \in \N}{\sum}\frac{d_{j+1}((\tau(\alpha))^{j+1},(\tau(\beta))^{j+1})}{(m+1)^{j+1}}=
0+\frac{1}{m+1}\underset{j \in \N}{\sum}\frac{d_{j+1}((\tau(\alpha))^{j+1},(\tau(\beta))^{j+1})}{(m+1)^{j}}=$$


$$=\frac{1}{m+1}\underset{j \in \N}{\sum}\frac{d_{j+1}((\alpha_1^{j},\alpha_2^{j},\ldots ,\alpha_m^{j}),(\beta_1^j,\beta_2^j,\ldots ,\beta_m^j))}{(m+1)^{j}}=$$ $$=
\frac{1}{m+1}\underset{j \in \N}{\sum}\frac{max\{d_{j}(\alpha_1^{j},\beta_1^j),d_{j}(\alpha_2^{j},\beta_2^j),\ldots ,d_{j}(\alpha_m^{j},\beta_m^{j})\}}{(m+1)^{j}}\leq$$

$$\leq \frac{1}{m+1}\left(\underset{j\in \N}{\sum}\frac{d_{j}(\alpha_{1}^{j},\beta_{1}^{j})}{(m+1)^{j}} + \underset{j\in \N}{\sum}\frac{d_{j}(\alpha_{2}^{j},\beta_{2}^{j})}{(m+1)^{j}} + \ldots + \underset{j\in \N}{\sum}\frac{d_{j}(\alpha_{m}^{j},\beta_{m}^{j})}{(m+1)^{j}}\right) \leq$$

$$\leq \frac{m}{m+1}max\left\lbrace\underset{j \in \N}{\sum}\frac{d_{j}(\alpha_1^{j},\beta_1^{j})}{(m+1)^{j}},\underset{j \in \N}{\sum}\frac{d_{j}(\alpha_2^{j},\beta_2^j)}{(m+1)^{j}},\ldots ,\underset{j \in \N}{\sum}\frac{d_{j}(\alpha_m^{j},\beta_m^j)}{(m+1)^{j}}\right\rbrace=$$

$$=\frac{m}{m+1}max\{d(\alpha_{1},\beta_{1}),d(\alpha_{2},\beta_{2}),\ldots ,d(\alpha_{m},\beta_{m})\}=\frac{m}{m+1}d_m(\alpha,\beta)$$
We proved the first part of proposition. Now we will show that $A_{\mS_\Omega}=\Omega$. Clearly, $\underset{i \in \{1,...,m\}}{\bigcup}\tau_{i}(\Omega) \subset \Omega$.
Take any $\alpha=(\alpha^{1},\alpha^{2},...) \in \Omega$. It is easy to see that $\alpha=\tau_{\alpha^{1}}(\alpha(1),\alpha(2),\ldots ,\alpha(m))$, hence $\alpha \in \tau_{\alpha^{1}}(\Omega \times \ldots \times \Omega)\subset \underset{i\in \{1,\ldots n\}}{\bigcup}\tau_{i}(\Omega \times \ldots \times \Omega)$.
\end{proof}
Now we will define a certain family of mappings (we will use them in next parts of the paper). At first, let $\Pi_1,\Pi_2,\ldots $ be a family of sets defined by the following inductive formula:\\
$$\Pi_1:=\underbrace{\Omega\times\ldots \times\Omega}_{m\mbox{ times}}$$
$$\Pi_{k+1}:=\underbrace{\Pi_k\times\ldots \times\Pi_k}_{m\mbox{ times}},\;\;\mbox{ for }k\geq 1$$
For every $k\in\N$, we will define a family of mappings $\{\tau_\alpha:\Pi_k\rightarrow \Omega:\alpha\in{}_{k}\Omega\}$ by induction with respect to $k$. For $k=1$ we have already defined this family - this is just $\{\tau_1,\ldots ,\tau_n\}$. For $\alpha=(\alpha^1,\alpha^2,\ldots \alpha^{k+1})\in{}_{k+1}\Omega$, we set $\tau_\alpha:\Pi_{k+1}\rightarrow \Omega$ by
$$
\tau_\alpha(\beta_1,\ldots ,\beta_m):=\tau_{\alpha^1}(\tau_{\alpha(1)}(\beta_1),\ldots ,\tau_{\alpha(m)}(\beta_m)).
$$

\section{Generalized code space and GIFSs}
In this section we assume that we work with some fixed GIFS $\mS=(f_1,...,f_n)$ of order $m$, on a complete metric space $(X,d)$.\\ 
Symbols $\Omega_{k},\ {}_{k}\Omega,\;\Omega_<,\;\Omega$ keep their meaning from previous section.\\ 
At first, we define the family of spaces $X_{k}, \;k\in \N,$ by the following inductive formula:
$$X_{1}:=\underbrace{X\times\ldots \times X}_{m\mbox{ times}}$$
$$X_{k+1}:=\underbrace{X_{k}\times\ldots \times X_{k}}_{m\mbox{ times}},\;\;\;\mbox{for }k\geq1$$
We consider each $X_k$ as a metric space with maximum metric. It is easy to see that for any $k \in \N$, the space $X_{k}$ is isometric to $X^{m^k}$, endowed with the maximum metric.\\
We will define the family of functions $\{f_\alpha:X_{k}\rightarrow X:\alpha\in{}_{k}\Omega\}$ for each $k \in \N$ inductively (the induction is with respect to $k$). For $k=1$ this is just the family $\{f_1,...,f_n\}$. 
Assume that we have defined the functions $f_\alpha$ for $\alpha\in\;_k\Omega$. Then for every $\alpha=(\alpha^1,\ldots ,\alpha^k,\alpha^{k+1})\in{}_{k+1}\Omega$, set
$$
f_\alpha(x_1,\ldots ,x_m):=f_{\alpha^1}(f_{\alpha(1)}(x_1),\ldots ,f_{\alpha(m)}(x_m)).
$$
Where $(x_{1},...,x_m) \in X_{k}\times...\times X_k=X_{k+1}$.\\
\begin{remark}
\emph{Note that the family $\{\tau_{\alpha}:\alpha \in {}_{<}\Omega\}$ from previous section is constructed in the same way.} 
\end{remark}
\begin{remark}\emph{
Clearly, in the case of IFSs (i.e., in the case when $m=1$), $_k\Omega=\{1,...,n\}^k$ and if $\alpha=(\alpha^1,...,\alpha^k)\in \;_k\Omega$, then $f_\alpha=f_{\alpha^1}\circ...\circ f_{\alpha^k}$, hence introduced families of mappings are natural generalizations of such compositions. In the rest of this section we will show that, using this families, we can prove a counterpart of Theorem \ref{it1}. 
}\end{remark}
Now we switch our attention to the attractor $A_\mS$ of $\mS$. At first, we will define the family of sets $A_k$, $k\in\N$, by the following inductive formula:
$$A^\mS_{1}:=\underbrace{A_\mS\times\ldots \times A_\mS}_{m\mbox{ times}}$$
$$A^\mS_{k+1}:=\underbrace{A^\mS_k\times\ldots \times A^\mS_k}_{m\mbox{ times}},\;\;\;\mbox{for }k\geq 1$$
By an easy induction one can show that for each $i \in \N$, $A^\mS_{i} \subset X_{i}$, and $diam(A^\mS_{i})=diam(A_{S})$.\\
Now for every $k\in\N$ and every $\alpha\in\;_k\Omega$, define
\begin{equation}
A_\alpha:=f_\alpha(A^\mS_k).\label{333}
\end{equation}
By definition, $A_\mS=f_{1}(A_\mS\times\ldots \times A_\mS)\cup\ldots \cup f_{n}(A_\mS\times\ldots \times A_\mS)=A_{1}\cup \ldots \cup A_{n}$. It turns out that the following (extended counterpart of \cite[Theorem 3.1 (1) and (4)]{M1}) also holds (the case $m=1$ is easy and well known):
\begin{proposition}\label{p11}
For every $k\in\N$ and every $\alpha\in{}_{k}\Omega$,
$$
A_\alpha=\underset{{\beta\in\Omega_{k+1}}}{\bigcup}A_{\alpha\;\hat{ }\;\beta}.
$$
\end{proposition}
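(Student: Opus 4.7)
The plan is to induct on $k$. The proof rests on two simple facts that I will extract from the definitions at the outset. First, unwinding the recursive definitions of $A_\alpha$, $A_n$, and $f_\alpha$ yields the one-step unfolding
$$
A_\alpha=f_{\alpha^1}(A_{\alpha(1)}\times\ldots\times A_{\alpha(m)})
$$
valid for each $\alpha\in\Omega^{k+1}$ (with $\alpha(i)\in\Omega^k$). Second, the fixed point equation $F_\mS(A_\mS,\ldots,A_\mS)=A_\mS$ gives the self-similarity $A_\mS=\bigcup_{l\in\Omega_1} A_{(l)}$. Both observations will be used repeatedly.

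For the base case $k=1$ with $\alpha=(j)$, I will start from $A_{(j)}=f_j(A_\mS\times\ldots\times A_\mS)$, substitute the self-similarity into each factor, distribute $f_j$ over the resulting finite union of product sets, and identify the generic summand $f_j(A_{(l_1)}\times\ldots\times A_{(l_m)})$ with $A_{(j)\;\hat{ }\;(l_1,\ldots,l_m)}$, where $(l_1,\ldots,l_m)\in\Omega_1^m=\Omega_2$. For the inductive step with $\alpha\in\Omega^{k+1}$, I will apply the one-step unfolding, use the induction hypothesis on each $A_{\alpha(i)}$ (legitimate since $\alpha(i)\in\Omega^k$) to rewrite it as $\bigcup_{\gamma_i\in\Omega_{k+1}} A_{\alpha(i)\;\hat{ }\;\gamma_i}$, and then pull the $m$ unions through the Cartesian product and past $f_{\alpha^1}$ to produce a single union over tuples $\beta=(\gamma_1,\ldots,\gamma_m)\in\Omega_{k+1}^m=\Omega_{k+2}$.

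The key bookkeeping step is the concatenation-projection identity $(\alpha\;\hat{ }\;\beta)(i)=\alpha(i)\;\hat{ }\;\beta_i$, which follows coordinatewise from the definition of $\alpha\mapsto\alpha(i)$. With this in hand, the one-step unfolding applied to $\alpha\;\hat{ }\;\beta$ (whose first coordinate is still $\alpha^1$) gives
$$
A_{\alpha\;\hat{ }\;\beta}=f_{\alpha^1}(A_{\alpha(1)\;\hat{ }\;\beta_1}\times\ldots\times A_{\alpha(m)\;\hat{ }\;\beta_m}),
$$
which is exactly the summand produced in the previous paragraph and closes the induction. I expect the main obstacle to be not depth but rather keeping the layered indices ($\Omega^k$ versus $\Omega_k$, and the nested projections $\alpha\mapsto\alpha(i)$) straight throughout the manipulation.
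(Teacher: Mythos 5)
Your proposal is correct and follows essentially the same route as the paper: induction on the length of $\alpha$, the one-step unfolding $A_\alpha=f_{\alpha^1}(A_{\alpha(1)}\times\ldots\times A_{\alpha(m)})$, the concatenation identity $(\alpha\;\hat{ }\;\beta)(i)=\alpha(i)\;\hat{ }\;\beta_i$, and distributing the unions through the Cartesian product and past $f_{\alpha^1}$, with the fixed-point equation supplying the base case. The only difference is cosmetic: you expand $A_\alpha$ into the union, whereas the paper collapses the union back to $A_\alpha$.
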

\begin{proof}
We will procede inductively. At first, fix any $i =1,\ldots ,n$. We have
$$\underset{{\beta\in\Omega_{2}}}{\bigcup}A_{i\;\hat{ }\;\beta} = \underset{{\beta\in\Omega_{2}}}{\bigcup}A_{(i,\beta)} = \underset{{\beta\in\Omega_{2}}}{\bigcup}f_{(i,\beta)}(A^\mS_2) = \underset{{(\beta_1,...,\beta_m)\in\Omega_{2}}}{\bigcup}f_{i}\left(f_{\beta_{1}}(A^\mS_1)\times\ldots \times f_{\beta_{m}}(A^\mS_1)\right) =$$

$$= f_{i}\left(\underset{{(\beta_1,...,\beta_m)\in\Omega_{2}}}{\bigcup}\left(f_{\beta_{1}}(A^\mS_1)\times\ldots \times f_{\beta_{m}}(A^\mS_1)\right)\right) = f_{i}\left(\underset{\beta_{1}\in \Omega_{1}}{\bigcup}\underset{\beta_{2}\in \Omega_{1}}{\bigcup}\ldots \underset{\beta_{m}\in \Omega_{1}}{\bigcup}\left(f_{\beta_{1}}(A^\mS_1)\times\ldots \times f_{\beta_{m}}(A^\mS_1)\right)\right) = $$

$$= f_{i}\left(\left(\underset{\beta_{1}\in \Omega_{1}}{\bigcup}f_{\beta_{1}}(A^\mS_1)\right)\times \left(\underset{\beta_{2}\in \Omega_{1}}{\bigcup}f_{\beta_{2}}(A^\mS_1)\right)\times \ldots \times \left(\underset{\beta_{m}\in \Omega_{1}}{\bigcup}f_{\beta_{m}}(A^\mS_1)\right)\right) =$$ $$= f_{i}(A_{\mS}\times \ldots \times A_{\mS}) = f_{i}(A^\mS_{1})=A_{i}$$

Now assume that thesis holds for some $k \in \N$.\\
Take any $\alpha=(\alpha^{1},\ldots ,\alpha^{k+1}) \in \;_{k+1}\Omega$. We have
$$
\underset{{\beta\in\Omega_{k+2}}}{\bigcup}A_{\alpha\;\hat{ }\;\beta} = \underset{{\beta\in\Omega_{k+2}}}{\bigcup}A_{(\alpha^{1},\ldots ,\alpha^{k+1},\beta)} = \underset{{\beta\in\Omega_{k+2}}}{\bigcup}f_{(\alpha^{1},\ldots ,\alpha^{k+1},\beta)}(A^\mS_{k+2}) =
$$
$$
=\underset{{\beta\in\Omega_{k+2}}}{\bigcup}f_{\alpha^1}\left(f_{(\alpha^1,\ldots ,\alpha^{k+1},\beta)(1)}(A^\mS_{k+1})\times\ldots \times f_{(\alpha^1,\ldots ,\alpha^{k+1},\beta)(m)}(A^\mS_{k+1})\right)=
$$
$$
=\underset{{(\beta_1,...,\beta_m)\in\Omega_{k+2}}}{\bigcup}f_{\alpha^1}\left(f_{(\alpha^{2}_{1},\ldots ,\alpha^{k+1}_{1},\beta_{1})}(A^\mS_{k+1})\times\ldots \times f_{(\alpha^{2}_{m},\ldots ,\alpha^{k+1}_{m},\beta_{m})}(A^\mS_{k+1})\right) =
$$
$$
=f_{\alpha^1}\left(\underset{{(\beta_1,...,\beta_m)\in\Omega_{k+2}}}{\bigcup}\left(f_{(\alpha^{2}_{1},\ldots ,\alpha^{k+1}_{1}, \beta_{1})}(A^\mS_{k+1})\times\ldots \times f_{(\alpha^{2}_{m},\ldots ,\alpha^{k+1}_{m},\beta_{m})}(A^\mS_{k+1})\right)\right)=
$$
$$
= f_{\alpha^1}\left(\underset{{\beta_{1}\in\Omega_{k+1}}}{\bigcup}\underset{{\beta_{2}\in\Omega_{k+1}}}{\bigcup}\ldots \underset{{\beta_{m}\in\Omega_{k+1}}}{\bigcup}\left(f_{(\alpha^{2}_{1},\ldots ,\alpha^{k+1}_{1}, \beta_{1})}(A^\mS_{k+1})\times\ldots \times f_{(\alpha^{2}_{m},\ldots ,\alpha^{k+1}_{m},\beta_{m})}(A^\mS_{k+1})\right)\right) =
$$
$$
= f_{\alpha^1}\left(\left(\underset{{\beta_{1}\in\Omega_{k+1}}}{\bigcup}f_{(\alpha^{2}_{1},\ldots ,\alpha^{k+1}_{1}, \beta_{1})}(A^\mS_{k+1})\right)\times\ldots \times \left(\underset{{\beta_{m}\in\Omega_{k+1}}}{\bigcup}f_{(\alpha^{2}_{m},\ldots ,\alpha^{k+1}_{m},\beta_{m})}(A^\mS_{k+1})\right)\right) = 
$$
$$
= f_{\alpha^1}\left(\left(\underset{{\beta_{1}\in\Omega_{k+1}}}{\bigcup}f_{(\alpha^{2}_{1},\ldots ,\alpha^{k+1}_{1})\;\hat{ }\;\beta_{1}}(A^\mS_{k+1})\right)\times\ldots \times \left(\underset{{\beta_{m}\in\Omega_{k+1}}}{\bigcup}f_{(\alpha^{2}_{m},\ldots ,\alpha^{k+1}_{m})\;\hat{ }\;\beta_{m}}(A^\mS_{k+1})\right)\right) =
$$
$$
= f_{\alpha^1}\left(\left(\underset{{\beta_{1}\in\Omega_{k+1}}}{\bigcup}A_{(\alpha^{2}_{1},\ldots ,\alpha^{k+1}_{1})\;\hat{ }\;\beta_{1}}\right)\times\ldots \times \left(\underset{{\beta_{m}\in\Omega_{k+1}}}{\bigcup}A_{(\alpha^{2}_{m},\ldots ,\alpha^{k+1}_{m})\;\hat{ }\;\beta_{m}}\right)\right) =
$$
$$
= f_{\alpha^1}\left(A_{(\alpha^{2}_{1},\ldots ,\alpha^{k+1}_{1})}\times\ldots \times A_{(\alpha^{2}_{m},\ldots ,\alpha^{k+1}_{m})}\right) =f_{\alpha^1}\left(A_{\alpha(1)}\times\ldots \times A_{\alpha(m)}\right)=
$$
$$
=f_{\alpha^1}\left(f_{\alpha(1)}(A^\mS_{k})\times\ldots \times f_{\alpha(m)}(A^\mS_{k})\right) = f_{\alpha}(A^\mS_{k+1}) = A_{\alpha} 
$$
\end{proof}
The following lemma will be usefull in further considerations.
\begin{lemma}\label{zwart}
Let $D \subset X$ be a bounded set and define sequence $(D_{k})_{k\in \N}$ by the following inductive formula:
$$D_{1}:=\underbrace{D\times\ldots \times D}_{m\mbox{ times}}$$
$$D_{k+1}:=\underbrace{D_k\times\ldots \times D_k}_{m\mbox{ times}},\;\;\;\mbox{for }k\geq 1$$
Then for any $\alpha=(\alpha^{1},\alpha^{2},\ldots ) \in \Omega$, $diam(f_{\alpha\vert_k}(D_{k}))\rightarrow 0 $.
\end{lemma}
\begin{proof}
Let $\varphi$ be a function which witness to the fact that $f_{1},...,f_{n}$ are generalized $\varphi$-contractions. We will show, by induction, that for any $k \in \N$, and any $\alpha\in\;_k\Omega$, $diam(f_{\alpha}(D_{k}))\leq \varphi^{(k)}(diam(D))$. 
For $i=1,...,n$, we have 
$$
diam(f_{i}(D_{1}))\leq \varphi(diam(D_{1}))=\varphi(diam(D))
$$
Now let us assume that for some $k \in \N$ and every $\alpha\in\;_k\Omega$, $diam(f_{\alpha}(D_{k}))\leq \varphi^{(k)}(diam(D))$. Then for $\alpha\in\;_{k+1}\Omega$,
$$diam(f_{\alpha}(D_{k+1}))=diam\left(f_{\alpha^1}\left(f_{\alpha(1)}(D_{k})\times\ldots \times f_{\alpha(m)}(D_{k})\right)\right)\leq
$$
$$
\leq \varphi\left(diam\left(f_{\alpha(1)}(D_{k})\times\ldots \times f_{\alpha(m)}(D_{k})\right)\right)=$$
$$
=\varphi\left( max\left\lbrace diam\left(f_{\alpha(1)}(D_{k})\right),\ldots, diam\left(f_{\alpha(m)}(D_{k})\right)\right\rbrace \right)\leq$$ $$\leq \varphi\left( max\left\lbrace\varphi^{(k)}(diam(D)),\ldots ,\varphi^{(k)}(diam(D))\right\rbrace \right)=\varphi^{(k+1)}(diam(D)).$$
In particular, for every $\alpha\in \Omega$, $diam(f_{\alpha\vert_k}(D_k))\leq \varphi^{(k)}(diam(D))$.
As $\varphi^{(k)}(t)\rightarrow 0$ for any $t \geq 0$, our proof is complete.
\end{proof}
The above results shows that first part of a generalization of Theorem \ref{it1} (which is also an extended counterpart of \cite[Theorem 3.1 (2) and (3)]{M1}) holds:
\begin{proposition}\label{jedg}
For every $\alpha\in\Omega$, the sequence $(A_{\alpha \vert _{k}})_{k \in \N}$ is decreasing and $diam(A_{\alpha\vert_k})\rightarrow 0$. In particular, there exists $x_\alpha\in X$ such that $\underset{k\in\N}{\bigcap}A_{\alpha\vert_{k}}=\{x_\alpha\}$.
\end{proposition}
\begin{proof}
By Proposition \ref{p11}, the sequence $(A_{\alpha\vert_k})$ is decreasing. Hence it is enough to show that $diam(A_{\alpha\vert_k})\rightarrow 0$, but this follows from Lemma $\ref{zwart}$ by setting $D=A_{\mS}$.
\end{proof}
By the above result, we can define the mapping $g:\Omega\rightarrow X$ by
$$
g(\alpha):=x_\alpha,
$$
where $x_\alpha$ is the unique point of $\underset{k\in\N}{\bigcap}A_{\alpha\vert_{k}}$.
\begin{lemma}\label{l1}
Let $(K_k)$ be a decreasing sequence of compact subsets of $X$ such that $\bigcap_{k\in\N}K_k=\{x\}$ for some $x\in X$. Then for every open set $U$ with $x\in U$, there is $k\in\N$ such that $K_k\subset U$.
\end{lemma}
\begin{proof}
Since for every $y\notin U$ there exists $k\in\N$ such that $y\notin K_k$, we have that
$$
K_1\setminus U\subset\bigcup_{k\in\N}K_1\setminus K_k
$$
Since sets $K_1\setminus K_k$ are open in $K_1$ and $K_1\setminus U$ is compact, there are $k_1,...,k_r\in\N$ such that
$$
K_1\setminus U\subset\bigcup_{i=1}^rK_1\setminus K_{k_i}.
$$
It is enough to take $k=\max\{k_1,...,k_r\}$.
\end{proof}
The next result shows that a counterpart of $(i)$ and $(ii)$ of Theorem \ref{it1} holds (this is also an extension of \cite[Theorem 3.1 (7)]{M1}).
\begin{theorem}\label{tt11}
The following conditions hold
\begin{itemize}
\item[(i)] The mapping $g$ is continuous;
\item[(ii)] $g(\Omega)=A_\mS$.
\end{itemize}
\end{theorem}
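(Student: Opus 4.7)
The plan is to leverage the two structural facts already established: the uniform diameter estimate in Lemma \ref{zwart}, which shows $\operatorname{diam}(A_{\alpha\vert_k})\le \varphi^k(\operatorname{diam}(A_\mS))$ independently of $\alpha$, and the inductive decomposition in Proposition \ref{p11}, which guarantees that every point of $A_\mS$ lies in some $A_{\alpha\vert_k}$ for all $k$.

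For (i), I would fix $\alpha\in\Omega$ and $\varepsilon>0$. Since $\varphi^k(t)\to 0$ as $k\to\infty$, Lemma \ref{zwart} (applied with $K=A_\mS$) lets me pick $k$ with $\varphi^k(\operatorname{diam}(A_\mS))<\varepsilon$. Because each $\Omega_i$ is finite and carries the discrete metric, the set $V(\alpha\vert_k)=\{\beta\in\Omega:\alpha\vert_k=\beta\vert_k\}$ is open, so some ball $B(\alpha,\delta)$ sits inside $V(\alpha\vert_k)$ (one can take $\delta=(m+1)^{-(k+1)}$ with the metric of Definition of $\Omega$). For any $\beta\in B(\alpha,\delta)$ we then have $g(\alpha),g(\beta)\in A_{\alpha\vert_k}=A_{\beta\vert_k}$, whence $d(g(\alpha),g(\beta))\le \operatorname{diam}(A_{\alpha\vert_k})<\varepsilon$.

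For (ii), the inclusion $g(\Omega)\subseteq A_\mS$ is immediate: for every $\alpha\in\Omega$, $g(\alpha)\in A_{\alpha\vert_1}=A_{(\alpha^1)}\subseteq A_{(1)}\cup\cdots\cup A_{(n)}=A_\mS$. For the reverse inclusion, I would produce a code for an arbitrary $x\in A_\mS$ by induction. Since $A_\mS=\bigcup_{i\le n}A_{(i)}$, choose $\alpha^1\in\Omega_1$ with $x\in A_{(\alpha^1)}$. Having chosen $\alpha^1,\ldots,\alpha^k$ with $x\in A_{(\alpha^1,\ldots,\alpha^k)}$, Proposition \ref{p11} gives $A_{(\alpha^1,\ldots,\alpha^k)}=\bigcup_{\beta\in\Omega_{k+1}} A_{(\alpha^1,\ldots,\alpha^k)\,\hat{}\,\beta}$, so we can pick $\alpha^{k+1}\in\Omega_{k+1}$ with $x\in A_{(\alpha^1,\ldots,\alpha^{k+1})}$. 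The resulting $\alpha=(\alpha^1,\alpha^2,\ldots)\in\Omega$ satisfies $x\in\bigcap_{k\in\N} A_{\alpha\vert_k}=\{x_\alpha\}$ by Corollary \ref{jedg}, so $x=g(\alpha)$.

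The only subtlety I anticipate is bookkeeping: the sets $\Omega_i$ have rapidly growing cardinalities $n^{m^{i-1}}$, and one must be careful that the choice function used in (ii) genuinely produces an element of $\Omega=\prod_i\Omega_i$ (this is automatic, one index at a time) and that $A_{\alpha\vert_k}$ is determined purely by $\alpha\vert_k$ (which follows from the recursive definition of $f_\alpha$). Neither requires compactness nor a separate continuity/closure argument, so I do not expect a real obstacle.
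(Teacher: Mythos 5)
Your proof is correct and follows essentially the same route as the paper: part (ii) is exactly the paper's recursive coding argument via Proposition \ref{p11}, and part (i) differs only cosmetically in that you run an $\varepsilon$--$\delta$ argument from the diameter bound of Lemma \ref{zwart}, whereas the paper phrases the same shrinking of the compact sets $A_{\alpha\vert_k}$ topologically (a decreasing sequence of compacta with singleton intersection eventually lies inside any neighbourhood of that point). Both versions hinge on the identical key observation that $A_{\beta\vert_k}=A_{\alpha\vert_k}$ whenever $\beta\in V(\alpha\vert_k)$.
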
 
\begin{proof}
(i) Let $\alpha \in \Omega$ and $U\subset \Omega$ be open, such that $g(\alpha)\in U$. As $\{g(\alpha)\}=\underset{k\in\N}{\bigcap}A_{\alpha\vert_{k}}$ and $(A_{\alpha\vert_{k}})_k,$ is decreasing sequence of compact sets, Lemma \ref{l1} implies that there is $k_{0} \in \N$ such that $A_{\alpha\vert_{k_{0}}} \subset U$. Hence for $\beta \in V(\alpha\vert_{k_{0}})$, we have that $g(\beta) \in A_{\beta\vert_{k_{0}}}=A_{\alpha\vert_{k_{0}}}\subset U$. This ends the proof of $(i)$.\\
(ii) Take any $\alpha \in \Omega$. Then $g(\alpha)\in \underset{k\in\N}{\bigcap}A_{\alpha\vert_{k}} \subset A_{\alpha^{1}} \subset A_{\mS}$. We proved that $g(\Omega)\subset A_{\mS}$. Now let $x \in A_{\mS}$. We have to construct proper $\alpha \in \Omega$ such that $g(\alpha)=x$. By Proposition $\ref{jedg}$, it is enough to prove $x \in A_{\alpha\vert_{k}}$ for any $k \in \N$.  We will proceed inductively. Since $A_{\mS}=\underset{i=1,...,n}{\bigcup}A_{i}$, there exists some $\alpha^1 \in\{1,...,n\}= \Omega_{1}$ such that $x \in A_{\alpha^1}$. 
Now let us assume that for some $k \in \N$ we defined $\alpha^{1}\in\Omega_{1},\ldots,\alpha^{k}\in\Omega_{k}$ such that for any $l \leq k$, we have $x\in A_{(\alpha^{1},\ldots,\alpha^{l})}$. By Proposition $\ref{p11}$, there exists $\alpha^{k+1} \in \Omega_{k+1}$ such that $x \in A_{(\alpha^1,...,\alpha^{k+1})}$. Finally set $\alpha = (\alpha^{1}, \alpha^{2},...)$. Then $x \in A_{\alpha\vert k}$ for any $k \in \N$ and our proof is finished. 
\end{proof}
Finally, we show that counterparts of $(iii)$ and $(iv)$ of Theorem \ref{it1} holds.
\begin{theorem}\label{t11}
For every $\alpha\in\Omega$ and every closed and bounded set $D\subset X$, $f_{\alpha\vert_k}(D_{k})\overset{H}{\rightarrow}\{g(\alpha)\}$, where the sequence $(D_k)$ is defined as in Lemma \ref{zwart} (let us note that Hausdorff metric may be used for closed and bounded sets).
\end{theorem}
\begin{proof}
Define 
$$
D':=D\cup A_\mS ,\ D'_{1}=\underbrace{D'\times\ldots \times D'}_{m\mbox{ times}},
$$

$$
D'_{k+1}:=\underbrace{D'_{k}\times \ldots \times D'_{k}}_{m\mbox{ times}},\ for\ k\geq 1.
$$ 
Then for every $k\in\N$, 
$$g(\alpha)\in A_{\alpha\vert_k}=f_{\alpha\vert_k}(A_{k})\subset f_{\alpha\vert_k}(D'_{k}).$$
On the other hand, by Lemma $\ref{zwart}$, $diam(f_{\alpha\vert_k}(D'_{k}))\rightarrow 0$ and $\underset{k\in\N}{\bigcap} f_{\alpha\vert_k}(D'_{k})=\{g(\alpha)\}$. In particular, for every $r>0$ there is $k_0\in\N$ such that for $k\geq k_0$, $diam(f_{\alpha\vert_k}(D_k')<r$. In particular, $f_{\alpha\vert_k}(D_k)\subset f_{\alpha\vert_k}(D_k')\subset B(g(\alpha),r)$, where $B(g(\alpha),r)$ states for the open ball.

\end{proof}
If $\mathbf{x}=(x_1,\ldots ,x_m)\in X^m$, then we define the sequence $(\mathbf{x}_k)$ by the following inductive way:
$$
\mathbf{x}_1:=\mathbf{x}
$$
$$
\mathbf{x}_{k+1}:=(\mathbf{x}_{k},\ldots ,\mathbf{x}_{k}),\;\;\;\mbox{for }k\geq 1 
$$
Clearly, $\mathbf{x}_k\in X_k$ for every $k\in\N$.
\begin{corollary}\label{t12}
For every $\mathbf{x}\in X^m$ and $\alpha\in\Omega$, we have that $\lim_{k\rightarrow\infty}f_{\alpha\vert_{k}}(\mathbf{x}_{k})=g(\alpha)$.
\end{corollary}
\begin{proof}
It simply follows from Theorem $\ref{t11}$. Take $D:=\{x_{1},\ldots ,x_{m}\}$. Then for each $k \in \N$, $\mathbf{x}_k\in D_{k}$, where $D_{k}$ is as in this Theorem. In particular, $f_{\alpha\vert_{k}}(\mathbf{x}_k)\in f_{\alpha\vert_{k}}(D_k)$ and the result follows.
\end{proof}

For the next result we need some further notation. If $k\geq 1$ and $$x=(x_1,...,x_{m^k},x_{m^k+1},...,x_{2m^k},...,x_{(m-1)m^k+1},...,x_{m^{k+1}})\in X^{m^{k+1}},$$ then for every $i\in \{1,...,m\}$,  we set $x(i):=(x_{(i-1)m^k+1},...,x_{im^k})$, the $i$-th block of $x$. For every $k\in\N$, let $h_k:X^{m^k}\rightarrow X_k$ be the "most natural" bijection, i.e.,
$$
h_1(x_1,...,x_m):=(x_1,...,x_m)
$$
and, for $x\in X^{m^{k+1}}$, 
$$
h_{k+1}(x):= (h_{k}(x(1)),h_{k}(x(2)),\ldots , h_{k}(x(m))).
$$
Clearly, each $h_k$ is an isometry. 
The next result is an extended counterpart of \cite[Theorem 3.1 (5)]{M1} and a part of \cite[Theorem 3.1 (6)]{M1}.
\begin{proposition}
For every $k\in\N$ and every $\alpha\in\;_k\Omega$, there exists a unique $x_\alpha\in X$ such that $f_\alpha(h_k(x_\alpha,...,x_\alpha))=x_\alpha$. Moreover, $x_\alpha\in A_\alpha$ for every $\alpha\in \Omega_<$ and $\{x_\alpha:\alpha\in\Omega_<\}$ is dense in $A_\mS$.
\end{proposition}
\begin{proof}
We first show the existence and uniqueness  of $x_\alpha$. For every $k\in\N$ and every $\alpha\in\;_k\Omega$, let $\tilde{f}_\alpha:=f_\alpha\circ h_k$. Then $\tilde{f}_\alpha:X^{m^{k}}\rightarrow X$. Recall that we consider $X^{m^{k}}$ as metric space with maximum metric.  It is enough to show that each $\tilde{f}_\alpha$ is a generalized $\varphi$-contraction - then we can use the fixed point theorem for generalized $\varphi$-contractions proved in \cite[Theorem 3.1(i)]{SS}.\\
Let $\varphi$ be a function which witness to the fact that $f_1,...,f_n$ are generalized $\varphi$-contractions. We will prove that for any $k\in \N$ and any $\alpha\in\;_k\Omega$,
\begin{equation}
\forall_{x,y\in X^{m^k}}\;\;d(\tilde{f}_\alpha(x),\tilde{f}_\alpha(y))\leq\varphi^{(k)}(d_m(x,y)).\label{444}
\end{equation}
Chose any $i\in \{1,...,n\}$. We have for any $x,y\in X^m$,
$$
d(\tilde{f}_i(x),\tilde{f}_i(y))=d(f_i(x),f_i(y))\leq \varphi(d_m(x,y)).
$$
Hence the case $k=1$ is true. Now assume that (\ref{444}) is true for some $k\ge1$. For $\alpha\in\;_{k+1}\Omega$, we have for any $x,y\in X^{m^{k+1}}$ (we denote the metrices on $X$, $X^{m^k}$ and on $X^{m^{k+1}}$ by the same letter $d$),
$$
d(\tilde{f}_\alpha(x),\tilde{f}_\alpha(y))=d\left(f_\alpha(h_{k+1}(x)),f_\alpha(h_{k+1}(y))\right)=$$ $$=
d\left(f_{\alpha^1}\left(f_{\alpha(1)}(h_k(x(1))),...,f_{\alpha(m)}(h_k(x(m)))\right),f_{\alpha^1}\left(f_{\alpha(1)}(h_k(y(1))),...,f_{\alpha(m)}(h_k(y(m)))\right)\right)=
$$
$$
=d\left(f_{\alpha^1}\left(\tilde{f}_{\alpha(1)}(x(1)),...,\tilde{f}_{\alpha(m)}(x(m))\right),f_{\alpha^1}\left(\tilde{f}_{\alpha(1)}(y(1)),...,\tilde{f}_{\alpha(m)}(y(m))\right)\right)\leq
$$
$$
\leq \varphi\left(\max\left\{d\left(\tilde{f}_{\alpha(1)}(x(1)),\tilde{f}_{\alpha(1)}(y(1))\right),...,d\left(\tilde{f}_{\alpha(m)}(x(m)),\tilde{f}_{\alpha(m)}(y(m))\right)\right\}\right)\leq
$$
$$
\leq \varphi\left(\max\left\{\varphi^k(d(x(1),y(1))),...,\varphi^{(k)}(d(x(m),y(m)))\right\}\right)=\varphi(\varphi^{(k)}(d(x,y)))=\varphi^{(k+1)}(d(x,y)).
$$
Now we show that the element $x_\alpha\in A_\alpha$ for every $\alpha\in\Omega_<$. Clearly, $\mS_{A_\mS}:=\left(f_1\vert_{A_\mS},...,f_n\vert_{A_\mS}\right)$ is a GIFS, hence, by the uniqueness of $x_\alpha$, we see that $x_\alpha\in A_\mS$. But then for $\alpha\in\;_k\Omega$, we have
$$
x_\alpha=\tilde{f}_\alpha(x_\alpha,...,x_\alpha)=f_\alpha(h_k(x_\alpha,...,x_\alpha))\in f_\alpha(A^\mS_k)=A_\alpha.
$$
Finally, let $x\in A_\mS$. Then $x=g(\alpha)$ for some $\alpha\in\Omega$. Let $U$ be any open set containing $x$. Since $\{x\}=\bigcap_{k\in\N}A_{\alpha\vert_k}$, by Lemma \ref{l1}, there is $k\in\N$ such that $A_{\alpha\vert_k}\subset U$. But then $x_{\alpha\vert_k}\in A_{\alpha\vert_k}\subset U$.
\end{proof}
The next result will complete the picture of GIFSs. Again, the case $m=1$ is known (and the case $m=2$ and $k=1$ is an extended counterpart of \cite[Theorem 3.1 (8)]{M1}).
Define the family of mappings $\{g_{k}:\Pi_k \rightarrow X_{k}:k\in\N\}$ by the following inductive formula:
$$g_1(\alpha_1,\ldots ,\alpha_m):=(g(\alpha_1),\ldots ,g(\alpha_m))$$
$$g_{k+1}(\beta_{1},\ldots,\beta_{m}):=(g_{k}(\beta_{1}),\ldots,g_{k}(\beta_{m})),\;k\geq 1$$
\begin{theorem}\label{diag2}
 For every $k\in \N$ and $\alpha \in {}_{k}\Omega$, $f_{\alpha}\circ g_{k} = g \circ \tau_{\alpha}$.
 \end{theorem} 
 \begin{proof}
 We will precede inductively with respect to $k$. Let $k=1$ and $\alpha=(\alpha_{1},\ldots ,\alpha_{m})\in \Omega\times\ldots \times\Omega$. Let $(\mathbf{x}_{k})$ is a sequence built as before Corollary \ref{t12} for some arbitrary taken $\mathbf{x}\in X^{m}$. Then:
$$f_i\circ g_1(\alpha)=f_i \left( g({\alpha_{1}}),\ldots ,g({\alpha_{m}})\right) =f_{i}\left(\underset{k \rightarrow \infty}{lim}(f_{\alpha_{1}\vert_{k}}(\mathbf{x}_{k})),\ldots ,\underset{k \rightarrow \infty}{lim}(f_{\alpha_{m}\vert_{k}}(\mathbf{x}_{k}))\right)=$$
$$\underset{k \rightarrow \infty}{lim}\left(f_{i}(f_{\alpha_{1}\vert_{k}}(\mathbf{x}_{k}),\ldots ,f_{\alpha_{m}\vert_{k}}(\mathbf{x}_{k}))\right)=\underset{k \rightarrow \infty}{lim}(f_{\tau_{i}(\alpha)\vert_{k+1}}(\mathbf{x}_{k+1}))=g(\tau_{i}(\alpha))=g\circ \tau_i(\alpha).$$
Now assume that for some $k \in \N$ we have the thesis and take any $\beta \in {}_{k+1}\Omega$ and some $\alpha=(\alpha_{1},\ldots,\alpha_{m})\in \Pi_{k+1}$.\\
Then (we denote $\beta=(\beta^{1},\ldots,\beta^{k+1})):$
$$
f_{\beta}\circ g_{k+1}(\alpha) = f_{\beta}(g_{k}(\alpha_{1}),\ldots,g_{k}(\alpha_{m})) = f_{\beta^{1}}\left( f_{\beta(1)}(g_{k}(\alpha_{1})),\ldots,f_{\beta(m)}(g_{k}(\alpha_{m}))\right) = 
$$ 
$$
 = f_{\beta^{1}} \left( (f_{\beta(1)}\circ g_{k})(\alpha_{1}),\ldots,(f_{\beta(m)}\circ g_{k}) (\alpha_{m}) \right) = f_{\beta^{1}} \left( (g\circ \tau_{\beta(1)}) (\alpha_{1}),\ldots,(g \circ \tau_{\beta(m)})(\alpha_{m}) \right) = 
$$ 
$$
= f_{\beta^{1}} \left( g(\tau_{\beta(1)} (\alpha_{1})),\ldots,g(\tau_{\beta(m)}(\alpha_{m})) \right) = \left( f_{\beta^{1}}\circ g_{1} \right) \left( \tau_{\beta(1)} (\alpha_{1}),\ldots,\tau_{\beta(m)} (\alpha_{m}) \right) =
$$
$$
\left( g \circ \tau_{\beta^{1}} \right) \left( \tau_{\beta(1)} (\alpha_{1}),\ldots,\tau_{\beta(m)} (\alpha_{m}) \right) = g \left( \tau_{\beta^{1}}(\tau_{\beta(1)} (\alpha_{1}),\ldots,\tau_{\beta(m)} (\alpha_{m}) \right) = g(\tau_{\beta}(\alpha))=g \circ \tau_{\beta}(\alpha)
$$
\end{proof}

Finally, we give an extension of counterpart of \cite[Theorem 3.1 (9)]{M1}. We say that a GIFS $\mS$ is \emph{totally disconnected}, if for every $k\in\N$ and every distinct $\alpha,\beta\in\;_k\Omega$, $A_\alpha\cap A_\beta=\emptyset$. We skip an easy proof of following proposition, which gives us important informations about totally disconnectedness of $A_{\mS}$.
\begin{proposition}
A GIFS $\mS$ is totally disconnected iff the mapping $g$ is injective.
\end{proposition}
\begin{proof}
At first note that if $x\in A_\beta$ for some $\beta\in\Omega_<$, then $x=g(\alpha)$ for some $\alpha\in\Omega$ such that $\beta\prec \alpha$ - this can be shown similarly as in the proof of Theorem \ref{tt11} (ii).\\
Hence if for some distinct $\beta_1,\beta_2\in \;_k\Omega$, there is $x\in A_{\beta_1}\cap A_{\beta_2}$, then for some $\alpha_1,\alpha_2\in\Omega$ with $\beta_1\prec\alpha_1$ and $\beta_2\prec\alpha_2$, $x=g(\alpha_1)=g(\alpha_2)$. In particular, $g$ is not injective. The converse implication is obvious.
\end{proof}

\section{Application - connectedness of attractors of GIFSs}
In this section we will study the problem of connectedness of attractors of GIFSs. Main theorems will be implied by some general and abstract results connected with properties of some compact spaces which admit certain families of compact sets.\\
A family of finite sequences $\mathcal{F}$ is called a \emph{tree}, if for every $\alpha=(\alpha_1,...,\alpha_k)\in \F\setminus\{\emptyset\}$, $\alpha\vert_{k-1}\in \F$.\\
A tree $\F$ is called a \emph{pruned tree}, if for every $\alpha\in\F$, there is an element $\beta$ such that $\alpha\hat\;\beta\in \F$.\\
A tree $\F$ is called a \emph{finitely splitting tree}, if for every $\alpha\in\F$ there is only finitely many elements $\beta$ such that $\alpha\hat\;\beta\in\F$.\\
If $\mathcal{F}$ is a tree, then an infinite sequence $\alpha$ is called a \emph{node of} $\mathcal{F}$, if for every $k\in\N$, $\alpha\vert_k\in \mathcal{F}$.\\
Let $X$ be a topological space and $\mathcal{F}$ be a pruned and finitely splitting tree. A family $\{D_\alpha:\alpha\in \mathcal{F}\}$ of {\bf compact} subsets of $X$ is called a \emph{proper family (adjusted to} $\mathcal{F}$), if
\begin{itemize}
\item[(a)] $X=D_\emptyset$;
\item[(b)] for every $\alpha\in \mathcal{F}$, $D_\alpha=\underset{\{ \beta:\ \alpha \hat \;\beta \in \mathcal{F} \} }{\bigcup}D_{\alpha \hat \; \beta}$;
\item[(c)] for every node $\alpha$ of $\mathcal{F}$, $\bigcap_{k\in\N}D_{\alpha\vert_k}$ is a singleton.
\end{itemize}
If $X$ is a topological space and $\{D_\alpha:\alpha\in\mathcal{F}\}$ is a proper family of subsets of $X$, then for every node $\alpha$ of $\mathcal{F}$, by $x_\alpha$ we denote the only element of $\bigcap_{k\in\N}D_{\alpha\vert_k}$. Clearly, for every $x\in X$, there is a node $\alpha$ of $\mathcal{F}$ such that $x=x_\alpha$.\\
It turnes out that the notion of proper families is appropriate for our study:
\begin{proposition}
Assume that $A$ is an attractor of some GIFS. Then $A$ admits a proper family of sets. Moreover, if $A$ is connected, then $A$ admits a proper family consisting of connected sets.
\end{proposition}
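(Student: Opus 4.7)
The plan is to take the proper family directly from the hierarchy of sets $\{A_\alpha\}_{\alpha\in\Omega^<}$ already constructed in Section~3, and then to verify the three axioms. Since each $\Omega_k$ is finite, I will first fix bijections $\psi_k:\Omega_k\to\{1,\ldots,|\Omega_k|\}$ and encode a multi-index $\alpha=(\alpha^1,\ldots,\alpha^k)\in\Omega^k$ as the honest natural-number sequence $\widehat{\alpha}=(\psi_1(\alpha^1),\ldots,\psi_k(\alpha^k))$. Let $\mathcal F$ consist of the empty sequence together with all such $\widehat{\alpha}$; this is a proper tree because the children of $\widehat{\alpha}$ are precisely the $|\Omega_{k+1}|$ encodings of $\alpha\,\hat{\ }\,\beta$ for $\beta\in\Omega_{k+1}$.

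I will then set $D_{\widehat{\alpha}}=A_\alpha$ and $D_\emptyset=A$, and check the three axioms of a proper family in turn. Condition (a) is immediate. Condition (b) is exactly Proposition~\ref{p11}: a glance at the proof of that proposition shows that the only ingredients are the identity $A=F_\mS(A,\ldots,A)$ together with the distributivity of images and Cartesian products over unions, all of which remain valid in the TGIFS setting. Condition (c) is where the TGIFS hypothesis enters: an infinite node of $\mathcal F$ corresponds to an $\alpha\in\Omega$, and since $F_\mS(A,\ldots,A)=A$, the attractor itself is an admissible choice of $H$ in the definition of TGIFS, whence
\[
\bigcap_{k\in\N}A_{\alpha|_k}=\bigcap_{k\in\N}f_{\alpha|_k}(A_k)=\{g(\alpha)\},
\]
a singleton, as required.

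For the second assertion, suppose $A$ is connected. A straightforward induction will show that each iterated product $A_k$ (defined by $A_1=A^m$, $A_{k+1}=A_k^m$) is connected, being a finite product of copies of the connected space $A$. Each $f_\alpha$ is continuous as a composition of the continuous maps $f_i$, so $A_\alpha=f_\alpha(A_k)$ is the continuous image of a connected set and hence connected. Therefore the same proper family $\{A_\alpha\}$ already consists of connected compact sets, and no further modification is needed.

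The one mild obstacle I anticipate is purely notational: convincing the reader that the proof of Proposition~\ref{p11} transports verbatim from the GIFS context of Section~3 to the TGIFS context here, and setting up the encoding $\psi_k$ so that $\Omega^<$ becomes a genuine tree of natural-number sequences in the sense of this section's definitions. Neither is a substantive difficulty, and the proof therefore reduces to a direct application of the structure theory already developed.
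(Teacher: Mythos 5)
Your proposal is correct and follows essentially the same route as the paper: both re-index $\Omega^<$ by natural-number sequences via bijections $\Omega_k\to\{1,\ldots,|\Omega_k|\}$ and take $D_{\widehat\alpha}=A_{h(\alpha)}$, with (b) coming from Proposition \ref{p11}, (c) from the TGIFS axiom applied to $H=A$, and connectedness from continuity of the $f_\alpha$ on the connected products $A_k$. The paper merely asserts that this family "witnesses the thesis," so your explicit verification of the three axioms (and of the fact that Proposition \ref{p11} carries over to the TGIFS setting) only fills in details the paper leaves implicit.
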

\begin{proof}
Let $(f_1,...,f_n)$ be a GIFS such that $A=f_1(A\times...\times A)\cup...\cup f_n(A\times...\times A)$.\\
For every $\alpha\in \Omega^<$, let $A_\alpha$ be defined as in (\ref{333}). Additionally, let $A_\emptyset=A$. Since $f_\alpha$ are continuous, then each $A_\alpha$ is compact, and if $A$ connected, then it is also connected. Then Proposition \ref{jedg} implies that the family $\{A_\alpha:\alpha\in\Omega^<\}$ satisfies our needs.
\end{proof}
We skip an easy proof of the following
\begin{lemma}\label{l111}
Let $(X,d)$ be a metric space and $(D_k)$ be a decreasing family of compact sets such that $\bigcap_{k\in\N} D_k$ is a singleton. Then $diam(D_k){\rightarrow} 0$.
\end{lemma}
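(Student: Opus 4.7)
The plan is to argue by contradiction, using the compactness of $D_1$ together with the fact that $(D_m)$ is decreasing with closed terms intersecting in $\{x\}$.

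Suppose towards contradiction that $\mathrm{diam}(D_k)\not\to 0$. Since $(D_m)$ is decreasing, $(\mathrm{diam}(D_m))$ is nonincreasing, so there is some $\varepsilon>0$ with $\mathrm{diam}(D_k)\geq\varepsilon$ for every $k\in\N$. Because each $D_k$ is compact (and nonempty), the diameter is attained, so one can select $x_k,y_k\in D_k$ with $d(x_k,y_k)\geq\varepsilon$.

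Next I would exploit that $\{x_k\},\{y_k\}\subset D_1$ and $D_1$ is compact, so a diagonal/subsequence argument yields a common subsequence along which $x_k\to x^*$ and $y_k\to y^*$. For any fixed $m$, once $k\geq m$ we have $x_k,y_k\in D_m$, and since $D_m$ is closed the limits satisfy $x^*,y^*\in D_m$. This being true for every $m$, both $x^*$ and $y^*$ lie in $\bigcap_{m\in\N}D_m=\{x\}$, so $x^*=y^*=x$. But then $d(x^*,y^*)=0$, contradicting $d(x_k,y_k)\geq\varepsilon$ passed to the limit.

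There is essentially no obstacle here: the only thing to check carefully is that the $\sup$ defining $\mathrm{diam}(D_k)$ is realized, which is immediate from compactness of $D_k\times D_k$ and continuity of $d$. The whole proof hinges on the standard fact that a decreasing sequence of closed subsets of a compact space with a one-point intersection forces any cluster point of any choice sequence to equal that one point.
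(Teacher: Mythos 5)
Your argument is correct and complete: the monotonicity of the diameters, the extraction of a common convergent subsequence of $(x_k)$ and $(y_k)$ inside the compact set $D_1$, and the observation that the limits must both lie in $\bigcap_{m\in\N}D_m$ together give the desired contradiction. The paper explicitly omits the proof of this lemma as easy, and yours is exactly the standard argument one would supply.
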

\begin{lemma}\label{cor11}
Let $(X,d)$ be a metric space and $\{D_\alpha:\alpha\in\mathcal{F}\}$ be a proper family of subsets of $X$. Then $\max\{diam(D_\alpha):\alpha\in\mathcal{F},\;|\alpha|=k\}\underset{k\rightarrow\infty}{\longrightarrow }0$.
\end{lemma}
\begin{proof}
For every $k\in\N$, let $a_k=\max\{diam(D_\alpha):\alpha\in\mathcal{F},\;|\alpha|=k\}$. Clearly, $(a_k)$ is nonincreasing. Assume on contrary that $(a_k)$ does not converge to $0$ and let $\varepsilon>0$ be such that $a_k\geq \varepsilon$ for every $k\in\N$. By induction, we will define a node of $\mathcal{F}$ for which $diam(D_{\alpha\vert_k})\geq \varepsilon$ for every $k \in \N$. By our assumption (and since $\mathcal{F}$ is finitely splitting), there is $\alpha^1$, such that the family $\{\alpha\in\mathcal{F}:\alpha\vert_1=\alpha^1\mbox{ and }diam(D_\alpha)\geq \varepsilon\}$ is infinite. Assume that for some $k$, we defined $\alpha^1,...,\alpha^k$ such that $\{\alpha\in\mathcal{F}:\alpha\vert_k=(\alpha^1,...,\alpha^k)\mbox{ and }diam(D_\alpha)\geq \varepsilon\}$ is infinite. Then there is $\alpha_{k+1}$ such that the family $\{\alpha\in\mathcal{F}:\alpha\vert_{k+1}=(\alpha^1,...,\alpha^{k+1})\mbox{ and }diam(D_\alpha)\geq \varepsilon\}$ is infinite. In this way we define the sequence $\alpha=(\alpha^1,\alpha^2,...)$ which is a node of $\mathcal{F}$ and such that $diam(D_{\alpha\vert_k})\geq \varepsilon$ for $k\in\N$. We get a contradiction with Lemma \ref{l111}.
\end{proof}
Recall that a metric space $(X,d)$ has the \emph{property $S$}, if for every $\epsilon>0$, $X$ can be covered by a finite family of connected sets with diameter $<\epsilon$. It is known that a connected and compact metric space $X$ is locally connected iff it has the property $S$ (cf. \cite[8.2, 8.4]{N}).
\begin{theorem}\label{m11}
Let $(X,d)$ be a connected metric space which admits a proper family of subsets of $X$ consisting of connected sets. Then $X$ is locally connected.
\end{theorem}
\begin{proof}
 Let $\{D_\alpha:\alpha\in\mathcal{F}\}$ be a proper family of subsets of $X$ consisting of connected sets. For every $k\in\N$, let $$\mathcal{A}_k:=\{D_\alpha:\alpha\in\mathcal{F},\;|\alpha|=k\},$$
and 
$$a_k:=\max\{diam(D):D\in\mathcal{A}_k\}.$$
By Lemma \ref{cor11}, $a_k\rightarrow 0$. Moreover, each $\mathcal{A}_k$ is a finite cover of $X$ consisting of connected sets. In particular, $X$ has the property $S$ and the result follows.
\end{proof}
As a direct implication, we get
\begin{theorem}
Assume that $A$ is an attractor of some GIFS. If $A$ is connected, then it is locally connected.
\end{theorem}

Recall that a topological $X$ space is \emph{arcwise connected} if for every $x,y\in X$ there is a continuous function $f:[0,1]\rightarrow X$ such that $f(0)=x$ and $f(1)=y$.\\
A sequence of sets $(D_1,...,D_m)$ is called a \emph{chain}, if $D_i\cap D_{i+1}\neq\emptyset$ for $i=1,...,m-1$. If $(D_1,...,D_m)$ is a chain and $x\in D_1$ and $y\in D_m$, then $(D_1,...,D_m)$ is called a \emph{chain which connects $x$ and $y$}.  A family of sets $\mathcal{A}$ is \emph{connected}, if for every $A,B\in\mathcal{A}$, there is a chain $(D_1,...,D_m)$ such that $D_1=A$ and $D_m=B$ and $D_i\in \mathcal{A}$ for $i \in \{1, \ldots, m\}$.\\
The following Lemma was proved in \cite{MS}. We say that a set $\Delta=\{y_0,...,y_m\}$ is a division of $[0,1]$, if $0=y_0<y_1<...<y_m=1$ (we will always use the convention that elements of divisions are listed in this way). We also denote $||\Delta||:=\max\{y_{i+1}-y_i:i=0,1,...,m-1\}$.
\begin{lemma}\label{l123}
Let $(X, d)$ be a complete metric space and $(a_k)_{k\geq 0}$ be a sequence
of positive numbers convergent to 0. Let $(\Delta_k)_{k\geq 0}$ be a sequence of divisions
of the unit interval $[0, 1]$ (denote $\Delta_k=\{y^k_0,...,y^k_{l_k}\}$) such that $\Delta_{k}\subset \Delta_{k+1}$ and $\lim_{k\rightarrow \infty}||\Delta_k||=0$. Let $(g_k)$ be a sequence of functions such that $g_k:\Delta_k\rightarrow X$, $g_{k+1}\vert_{\Delta_k}=g_k$ and for every $m\geq k$, every $y^k_i$ and every $y_j^m\in[y^k_i,y^k_{i+1}]\cap \Delta_m$, 
$$
d\left(g_m \left( y^m_j \right) ,g_k\left(y^k_i\right)\right)\leq a_k\;\;\mbox{   and   }\;\;d\left(g_m \left( y^m_j \right) ,g_k\left(y^k_{i+1}\right)\right)\leq a_k.
$$
Then there exists a continuous function $g:[0,1]\rightarrow X$ such that $g\vert_{\Delta_k}=g_k$ for every $k\in\N$.
\end{lemma}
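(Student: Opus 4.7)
The plan is to first define $g$ on the dense set $\Delta=\bigcup_{n\in\N}\Delta_n$ by consistency of the $g_n$, then show the resulting map is uniformly continuous on $\Delta$, and finally invoke completeness of $X$ to extend it continuously to $[0,1]$.

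First I would observe that $\Delta$ is dense in $[0,1]$: this is immediate from $\|\Delta_n\|\to 0$. Since the $\Delta_n$ are nested and $g_{n+1}|_{\Delta_n}=g_n$, the formula $\tilde g(y)=g_n(y)$ whenever $y\in\Delta_n$ unambiguously defines a map $\tilde g:\Delta\to X$, and on each $\Delta_n$ it agrees with $g_n$.

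The main step is to show that $\tilde g$ is uniformly continuous on $\Delta$, and this is where the hypothesis on the $a_n$'s does the work. Fix $\varepsilon>0$ and choose $n$ with $2a_n<\varepsilon$. Let $\delta=\min_i(y^n_{i+1}-y^n_i)>0$. Take any $y,z\in\Delta$ with $|y-z|<\delta$. Since $\delta$ is smaller than every gap of $\Delta_n$, the points $y,z$ lie either in a single subinterval $[y^n_i,y^n_{i+1}]$ or in two adjacent subintervals $[y^n_i,y^n_{i+1}]$ and $[y^n_{i+1},y^n_{i+2}]$. Choose $m\geq n$ large enough so that $y,z\in\Delta_m$ (take $m$ to be the maximum of the indices for which $y,z$ first appear, and at least $n$). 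Then the hypothesis applied to $m\geq n$ yields, in both cases, a common point $y^n_\ell$ (either $y^n_i$, $y^n_{i+1}$, or the shared endpoint $y^n_{i+1}$ in the adjacent case) such that
$$
d(\tilde g(y),g_n(y^n_\ell))\leq a_n,\qquad d(\tilde g(z),g_n(y^n_\ell))\leq a_n,
$$
and hence $d(\tilde g(y),\tilde g(z))\leq 2a_n<\varepsilon$ by the triangle inequality. Thus $\tilde g$ is uniformly continuous.

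Finally, since $X$ is complete and $\Delta$ is dense in the compact metric space $[0,1]$, the standard extension theorem for uniformly continuous maps from a dense subset into a complete metric space produces a (unique) continuous $g:[0,1]\to X$ extending $\tilde g$. Because $g|_\Delta=\tilde g$, and $\tilde g|_{\Delta_n}=g_n$ by construction, we conclude $g|_{\Delta_n}=g_n$ for every $n\in\N$, which is the thesis. The only delicate point is the choice of $\delta$ as the minimum gap of $\Delta_n$ (rather than, say, $\|\Delta_n\|$), since that is what guarantees two $\delta$-close points cannot skip a whole subinterval of $\Delta_n$; everything else is formal.
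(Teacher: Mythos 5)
Your proof is correct and complete: the decomposition into (a) well-definedness of $\tilde g$ on the dense set $\Delta$, (b) uniform continuity via the minimum-gap choice of $\delta$ and the two-case (same or adjacent subinterval) triangle-inequality estimate $d(\tilde g(y),\tilde g(z))\leq 2a_n$, and (c) the standard extension theorem into the complete space $X$, is exactly the standard argument, and the delicate point you flag (taking $\delta$ to be the minimum gap rather than $\|\Delta_n\|$) is indeed the one place where care is needed. The paper itself gives no proof of this lemma --- it is quoted from an external reference --- so there is nothing to compare against; your argument stands on its own as a valid proof.
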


\begin{theorem}\label{mt1}
Let $(X,d)$ be a metric space which admits a proper family $\{D_\alpha:\alpha\in\mathcal{F}\}$ of subsets of $X$ such that for every $\alpha\in\mathcal{F}$, the family $\{D_{\alpha\hat\;\beta}:\alpha\hat\;\beta\in \mathcal{F}\}$ is connected. Then $X$ is arcwise connected.
\end{theorem}


\begin{proof}
 Let $\{D_\alpha:\alpha\in\mathcal{F}\}$ be a proper family of subsets of $X$ such that for every $\alpha\in \F$, the family
$$
\mathcal{D}_\alpha:=\{D_{\alpha\hat\;\beta}:\alpha\hat\;\beta\in\mathcal{F}\}
$$
is connected. 
To prove that $X$ is arcwise connected, chose $x,y\in X$. We will use Lemma \ref{l123} - we will construct appropriate sequences of reals, divisions and partial functions.\\
For every $k\in\N$, let $$\mathcal{A}_k:=\{D_\alpha:\alpha\in\mathcal{F},\;|\alpha|=k\},$$
and let $a_k:=\max\{diam(D):D\in\mathcal{A}_k\}$. By Lemma \ref{cor11}, $a_k\rightarrow 0$.\\
Define the sequence of divisions $(\Delta_k)$ and the sequence of functions $(g_k)$ by induction such that the following conditions holds for every $k\in\N$ and $i=0,...,l_k-1$ (we will always abbreviate elements of $\Delta_k$ by $y^k_0,...,y^k_{l_k}$ so  that $y^k_i<y^k_{i+1}$):
\begin{itemize}
\item[(i)] $\Delta_0=\{0,1\}$, $g_0(0)=x$ and $g_0(1)=y$;
\item[(ii)] $\Delta_{k}\subset \Delta_{k+1}$;
\item[(iii)] the elements of $\Delta_{k+1}\cap [y^k_i,y^k_{i+1}]$ are uniformly distributed on the interval $[y^k_i,y^k_{i+1}]$;
\item[(iv)] the set $\Delta_{k+1}\cap [y^k_i,y^k_{i+1}]$ has at least three elements;
\item[(v)] $g_k:\Delta_k\rightarrow X$;
\item[(vi)] there is $D\in\mathcal{A}_k$ such that for every $m\geq k$, $g_m(\Delta_m\cap[y^k_i,y^k_{i+1}])\subset D$;
\item[(vii)] if $k\geq 1$, then $g_{k}\vert_{\Delta_{k-1}}=g_{k-1}$.
\end{itemize}
Let $\Delta_0$ and $g_0$ be as in $(i)$. Assume that we have already defined $\Delta_0,\Delta_1,...,\Delta_n$ and $g_0,...,g_n$ for some $n\geq 0$ so that $(i)-(vii)$ are satisfied for every $k=0,...,n$ and $i=0,...,l_k-1$ (of course, in $(vi)$ we can only consider $n\geq m\geq k$).\\
Now let $i \in \{0,...,l_{k}-1 \}$. By $(vi)$, there is $D\in\mathcal{A}_k$ such that $g_k(y^k_i),g_k(y^k_{i+1})\in D$. By our assumptions, there is a chain $(D_1,....,D_n)$ which connects $g_k(y^k_{i})$ and $g_k(y^k_{i+1})$ such that $n\geq 2$ (clearly, we can assume $n\geq 2$), $D_i\subset D$ and $D_i\in\mathcal{A}_{k+1}$. Let $y_0:=y^k_{i}$, $y_{n}:=y^k_{i+1}$ and $y_1,..,y_{n-1}$ be uniformly distributed on $[y_0,y_n]$, and let $x_0:=g_k(y^k_{i})$, $x_n:=g_k(y^k_{i+1})$ and $x_j\in D_j\cap D_{j+1}$ for $j=1,...,n-1$. Then define $g(y_j):=x_j$ for $j=0,...,n$.\\
Let $\Delta_{k+1}$ be a family of all $y_j's$ which appear in this construction (for all $i's$), and $g_{k+1}$ be the union of all partial functions $g$.\\
Proceeding inductively, we get desired sequences. Now it is easy to check that the assumptions of Lemma \ref{l123} are satisfied, hence there exsists a continuous function $g:[0,1]\rightarrow X$ such that $g(0)=x$ and $g(1)=y$. This ends the proof.
\end{proof}

\begin{lemma}\label{lemat1}
Let $(X,d)$ be a connected metric space and $\mathcal{A}$ be a finite family of closed and nonempty subsets of $X$ such that $X=\bigcup\mathcal{A}$. Then $\mathcal{A}$ is connected.
\end{lemma}
\begin{proof}
Let $D\in \mathcal{A}$ and set
$$
\mathcal{A}_D:=\{(D_1,...,D_n):n\geq 2,\;D=D_1,\;D_i\cap D_{i+1}\neq\emptyset,\;D_i\in\mathcal{A}\}
$$
It is enough to show that for every $E\in \mathcal{A}$ there is $(D_1,...,D_n)\in\mathcal{A}_D$ such that $D_n=E$. Assume on the contrary that it is not the case and let $E\in\mathcal{A}$ be a set which witness to this. Set $$\mathcal{B}:=\{G:\exists_{(D_1,...,D_n)\in \mathcal{A}_D}\;\exists_{i=1,...,n}\;G=D_i\},$$
 $F:=\bigcup\mathcal{B}$ and $H:=\bigcup(\mathcal{A}\setminus\mathcal{B})$. By our assumption, $F$ and $H$ are nonempty ($H$ is nonempty, because $E\notin \mathcal{B}$). They are also closed and disjoint. This is a contradiction.
\end{proof}
Finally, Theorem \ref{mt1} and Lemma \ref{lemat1} imply the following generalization of \cite[Theorem 3.1]{MS}
\begin{theorem}\label{coraaa}
Assume that $A$ is an attractor of some GIFS $\mS=(f_1,...,f_n)$. Then the following conditions are equivalent:
\begin{itemize}
\item[(i)] $A$ is connected;
\item[(ii)] $A$ is arcwise connected;
\item[(iii)] the family $\{f_1(A\times...\times A),...,f_n(A\times...\times A)\}$ is connected.
\end{itemize}
\end{theorem}

\begin{proof}
Implication $(ii)\Rightarrow(i)$ is trivial and implication $(i)\Rightarrow (iii)$ follows from Lemma \ref{lemat1}. Now assume that the family $\{A_{1},...,A_{n}\}$ is connected (we use the notation from (\ref{333})). In view of Theorem \ref{mt1}, it is enough to show that for every $k\in\N$ and every $\alpha\in\;_k\Omega$, the family $\{A_{\alpha\hat\;\beta}:\beta\in\Omega_{k+1}\}$ is connected. We will prove it by induction.\\ 
Let $i\in \{1,...,n\}$. By our assumption, the family $\{A_{j}: j=1,...,n\}$ is connected. This implies that the family
$$
\left\{ A_{j_1}\times...\times A_{j_m}:j_1,...,j_m \in \{1,...,n\} \right\}
$$
is connected, and hence the family $\{f_{i}(A_{j_1}\times...\times A_{j_m}):j_1,...,j_m=1,...,n\}$ is connected. But this gives thesis for $k=1$ since $\{f_{i}(A_{j_1}\times...\times A_{j_m}):j_1,...,j_m \in \{1,...,n \} \}=\{A_{i\hat\;\beta}:\beta\in \Omega_2\}$.\\
Assume that we proved the thesis for some $k$ and let $\alpha\in\;_{k+1}\Omega$.
By inductive assumption, families $\{A_{\alpha(1)\hat\;\beta}:\beta\in\Omega_{k+1}\}$,...,$\{A_{\alpha(m)\hat\;\beta}:\beta\in\Omega_{k+1}\}$ are connected, and this easily implies that the family
$$
\{A_{\alpha(1)\hat\;\beta_1}\times...\times A_{\alpha(m)\hat\;\beta_m}:\beta_1,...,\beta_m\in\Omega_{k+1}\}
$$
is also connected. Similarly as in the proof of Proposition \ref{p11}, we can show that for any $\beta=(\beta_1,...,\beta_m)\in \Omega_{k+2}$,
$$
A_{\alpha\hat\;\beta}=f_{\alpha^1}(A_{\alpha(1)\hat\;\beta_1}\times...\times A_{\alpha(m)\hat\;\beta_m}).\label{ab1}
$$
All in all, we get that $\{A_{\alpha\hat\;\beta}:\beta\in\Omega_{k+2}\}$ is connected.
\end{proof}



\end{document}